\newcommand{\pagestart}{1}
\def\@evenhead{\footnotesize\thepage\hfil\slshape\leftmark}%
\def\@oddhead{\footnotesize{\slshape\rightmark}\hfil\thepage}%
\numberwithin{equation}{section} \numberwithin{figure}{section}
\numberwithin{table}{section}
\renewcommand{\thefootnote}{\roman{footnote}}
\renewcommand{\thefootnote}{\Roman{footnote}}
\renewcommand{\thefootnote}{\arabic{footnote}}
\newenvironment{summary}{\vskip\baselineskip \noindent\small\bf Summary: \rm}%
{\vskip\baselineskip}
\newenvironment{proof}
{ \par\noindent{$ \mathsf{Proof\!:}$} }
{ \hspace{\stretch{1}}$\Box$}
\newtheorem{theorem}{Theorem }[section]
\newtheorem{corollary}[theorem]{Corollary}
\newtheorem{definition}[theorem]{Definition}
{\theorembodyfont{\rmfamily}\newtheorem{example}[theorem]{Example}}
\newtheorem{lemma}[theorem]{Lemma}
\newtheorem{proposition}[theorem]{Proposition}
{\theorembodyfont{\rmfamily}\newtheorem{remark}[theorem]{Remark}}
\title{Summability of Sequence of Random Variables}
\author{Jinlu Li, Robert Mendris}
\date{August 29, 2017}
\date{January 01, 2015}
\newcommand{\br}{\newline\noindent}
\newcommand{\R}{\mathbb{R}}
\newcommand{\Rii}{\mathbb{R}^{**}}
\newcommand{\ds}{\displaystyle} 
\newcommand{\ontop}[2]{\substack{{#1}\\{#2}}}
\newcommand{\suparrow}{\xrightarrow{\mathrm{a.s.}}}  
\newcommand{\abs}[1]{\left|{#1}\right|}
\newcommand{\set}[1]{\left({#1}\right)}
\newcommand{\eps}{\varepsilon}
\newcommand{\mylabel}[1]{\label{#1}}  
\newenvironment{myEnumerate}
{\begin{enumerate}
  \setlength{\itemsep}{3pt}
  \setlength{\parskip}{0pt}
  \setlength{\parsep}{0pt}}
{\end{enumerate}}
\begin{document}
\maketitle\thispagestyle{empty}


\begin{summary}
   In this paper, we study the summability properties of double sequences of real constants
   which map sequences of random variables to sequences of random variables that are defined on the same probability sample space.
   We show that a regular method of summability is still regular on sequences of random variables with almost everywhere convergence,
   almost sure convergence, and with $L_p$-convergence. 
   It is not necessarily regular on sequences of random variables with convergence in probability.
   We extend these results to random variables with values in extended real numbers
  (extended real numbers include infinite values, see definitions \ref{def:extendedReal} and \ref{def:1}).
  For this we introduce a construction that allows us to multiply sequences of extended real numbers with infinite real matrices.
\end{summary}

\renewcommand{\thefootnote}{}
\footnotetext{\hspace*{-.51cm}
2010 AMS Subject Classification:   40A05,  40C05,  60B20.
Keywords: Summability; random variable; sequence of random variables; extended real numbers.
}

\section{Introduction}\mylabel{sec:1}

Let $ A = (a_{ij}),\; i = 1, 2, ... ,\; j = 0, 1, 2, ... , $ be a double sequence of real constants, that is,
\begin{eqnarray}
  { A={\left(\begin{matrix}a_{10} & a_{11} & a_{12} & \text{......}\\
  a_{20} & a_{21} & a_{22} & \text{......}\\
  a_{30} & a_{31} & a_{32} & \text{......}\\
  \text{...} & \text{...} & \text{...} & \text{......}\end{matrix}\right)} }
  .
\end{eqnarray}

Let $ S $ denote the set of sequences of real constants. 
In traditional summability theory, $A$ is, under certain assumptions, 
considered to be a mapping from a subset of $ S $ to $ S $ 
(see \cite{Baron:66}, \cite{Brudno:45}, \cite{Cooke:50}, \cite{Fast:51},
 [14-18],
 \cite{Peyerimhoff:69}).

Let $S_1, S_2$ be non-empty subsets of $S$. $A$ is said to be summable from $S_1$ to $S_2$, 
whenever for any $x\in S_1, Ax\in S_2$. 
It has been studied by many authors for $S_1$ to be some special space contained in $S$.
For example, $S_1=\ell_p, S_b,$ etc, where $S_b$ denotes the set of bounded sequences of real numbers 
(see \cite{Boos:00},
 [10-13],
 \cite{Neubrum:Smital:Salat:68}, \cite{Zygmund:41}).

In recent years, some authors have extended the concept of summability to statistical summability, 
which studies the convergence of sequences mapped by $A$ in a certain proportion 
(see \cite{Belen:Mursaleen:Yildirim:12}, \cite{Connor:88},
 [7-8],
 [19-22],
 [25-26]).

In probability theory and stochastic processes theory, 
the convergence of a sequence of random variables has been an important topic, e.g. in central limit theorems.
It leads us to consider the summability of the sequence of random variables by a given double sequence of real numbers $A$.

\bigskip
Let $ (\Omega, F, P) $ be a probability space.  Let $ x = [X_1, X_2, X_3, ...] = \{X_n\}_{n=1}^\infty$ 
be a sequence of real valued random variables defined on the same sample space $ \Omega $. 
We will consider the following types of convergence of this sequence of random variables:
\begin{myEnumerate}
  \item[(1)]    $ X_n \rightarrow X_\infty, $  a.e. in $ \Omega $;
  \item[(2)]    $ X_n \rightarrow X_\infty, $ in pr.;
  \item[(3)]    $ X_n \rightarrow X_\infty, $ in $ L_p(\Omega), $ for $ p \ge 1 $.
\end{myEnumerate}
In this paper, we are interested in finding the conditions on $ A $, 
under which $ Ax $ is convergent a.e., almost surely, in pr., or in $ L_p(\Omega), $ respectively,
for any given $ x $ satisfying one of the above conditions.

\section{Some known results and a need for new approach}\mylabel{sec:2}

All random variables considered in this paper are real random variables that are finite almost everywhere 
and defined on a probability space $ (\Omega, P, F) $, that is, every random variable $ X $, satisfies
\begin{eqnarray}\mylabel{eq:finite}
  P(X \in \R) = 1.
\end{eqnarray}

For example, the following results are straightforward
consequences from the traditional summability theory
(\cite{Baron:66}, \cite{Cooke:50}, \cite{Peyerimhoff:69}).

\begin{proposition}\mylabel{prop:1}  If $A$ satisfies the following three conditions:
  \begin{enumerate}
    \item ${\underset{1\le i<\infty}{\text{lub}}{{\sum}^{\infty}_{j=0}}{ |a_{ij}| }=M<\infty}$;  
    \item ${\underset{i\rightarrow\infty}{\text{lim}} \,a_{ij} } \;\;\text{ exists for } j = 1, 2, \ldots{}$;
    \item ${\underset{i\rightarrow\infty}{\text{lim}}{\sum}^{\infty}_{j=1}{ a_{ij} }} \;\;\text{ exists}$.
  \end{enumerate}
  Then $ X_n \rightarrow X_\infty, $  a.e. in $ \Omega $ implies  
  $ (Ax)_n \rightarrow \tilde{X}_\infty, $  a.e. in $ \Omega $ for some random variable $ \tilde{X}_\infty $ 
  that may be different from $ X_\infty $.
\end{proposition}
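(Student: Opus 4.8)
\noindent\emph{Proof idea.} The plan is to deduce the statement from the classical Kojima--Schur theorem on conservative matrices --- the characterization of infinite real matrices that map convergent real sequences to convergent real sequences (see \cite{Cooke:50}, \cite{Peyerimhoff:69}) --- applied separately at almost every sample point. First I would introduce the event
\[
  \Omega_0=\bigl\{\omega\in\Omega:\ X_n(\omega)\to X_\infty(\omega)\ \text{in}\ \R\bigr\}
  \cap\bigcap_{j=1}^{\infty}\{\omega:\ X_j(\omega)\in\R\}\cap\{\omega:\ X_\infty(\omega)\in\R\}.
\]
By the hypothesis $X_n\to X_\infty$ a.e.\ and by the blanket assumption that each random variable is finite a.e., every one of the countably many events in this intersection has probability $1$; hence $P(\Omega_0)=1$. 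All of the following takes place on $\Omega_0$.

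Second, I would verify that $Ax$ is a well-defined sequence of random variables. Fix $\omega\in\Omega_0$: the real sequence $x(\omega)=\{X_j(\omega)\}_{j\ge 1}$ converges and is therefore bounded, say $\abs{X_j(\omega)}\le C(\omega)$ for all $j$. Condition~1 then gives $\sum_{j=1}^{\infty}\abs{a_{ij}X_j(\omega)}\le M\,C(\omega)<\infty$, so for every $i$ the series $(Ax)_i(\omega)=\sum_{j=1}^{\infty}a_{ij}X_j(\omega)$ converges absolutely. As a pointwise limit on $\Omega_0$ of the partial sums $\sum_{j=1}^{N}a_{ij}X_j$, each $(Ax)_i$ is measurable on $\Omega_0$; off $\Omega_0$ we may set it to $0$, which is immaterial for a.e.\ statements.

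Third, I would apply the Kojima--Schur theorem: conditions 1--3 are exactly the hypotheses under which $A$ sends every convergent real sequence to a convergent real sequence. Since $x(\omega)$ is convergent for each $\omega\in\Omega_0$, the sequence $\{(Ax)_i(\omega)\}_i$ converges in $\R$; denote its limit by $\tilde X_\infty(\omega)$ (and put $\tilde X_\infty=0$ on $\Omega\setminus\Omega_0$). Being a pointwise limit on $\Omega_0$ of the random variables $(Ax)_i$, the function $\tilde X_\infty$ is measurable and finite, i.e.\ a genuine random variable, and by construction $(Ax)_i\to\tilde X_\infty$ on all of $\Omega_0$, hence a.e.\ in $\Omega$. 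That $\tilde X_\infty$ need not coincide with $X_\infty$ is already visible in the scalar setting: for $x_j\to\ell$ the limit of $Ax$ equals $(\chi-\sum_j\alpha_j)\ell+\sum_j\alpha_j x_j$ with $\alpha_j=\lim_i a_{ij}$ and $\chi=\lim_i\sum_j a_{ij}$, which in general differs from $\ell$ unless one also assumes regularity.

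There is no deep obstacle here; the only substantive ingredient is the pointwise use of the Kojima--Schur theorem. The points that need a little care are the two routine verifications above --- the absolute convergence of each transformed row $\sum_j a_{ij}X_j(\omega)$ for a.e.\ $\omega$, for which condition~1 together with the boundedness of convergent sequences is precisely what is required, and the measurability of $\tilde X_\infty$ --- and both are dispatched by working on the full-measure event $\Omega_0$.
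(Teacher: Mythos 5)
Your proof is correct and follows exactly the route the paper intends: Proposition \ref{prop:1} is stated there without proof as a ``straightforward consequence'' of classical summability theory (the Kojima--Schur characterization of conservative matrices), and the paper's proof of the analogous Proposition \ref{prop:2} uses precisely your device of passing to a full-measure event on which all $X_j(\omega)$ and $X_\infty(\omega)$ are finite and convergent and then applying the classical matrix theorem pointwise. Your explicit verification of the absolute convergence of each row $\sum_j a_{ij}X_j(\omega)$ and of the measurability of $\tilde X_\infty$ supplies details the paper leaves implicit.
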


Extended real numbers are often defined as:

\begin{definition}\mylabel{def:extendedReal}
  \begin{eqnarray}
    \mathbb{R}^* = \mathbb{R} \cup \{-\infty,\infty\}.
  \end{eqnarray}
\end{definition}

However, to handle matrix multiplication $ Ax $, where the vector $ x $ can have infinite entries, 
we need to be able to add and subtract infinite values.

\bigskip
Let $ V $ be a 2-dimensional vector space
over $ \mathbb{R} $ with basis $ (1,x). $
Its lexicographic ordering generated by $ 1 < x $ and the standard ordering of $ \mathbb{R} $ 
is consistent with the vector addition and multiplication by a scalar:

Let $ u,v,w \in V \wedge a \in \mathbb{R}, $ then:
\begin{eqnarray*}
  u < v  \Rightarrow & u + w & < v + w \\
  (0 < a) \wedge (u < v)  \Rightarrow & a \cdot u & < a \cdot v
\end{eqnarray*}

\begin{definition}\mylabel{def:1}
  In the above construction, make the following notation:
  \begin{eqnarray*}
    \infty & := & x \\
    \Rii & := & V \\
    \R_{inf}^+ & := & \{u \in \Rii:\; \forall a \in \R\; (a < u) \} \\
    \R_{inf}^- & := & \{u \in \Rii:\; \forall a \in \R\; (u < a) \} \\
    \R_{inf} & := & \R_{inf}^+ \cup \R_{inf}^-
  \end{eqnarray*}
  Then we call $ \Rii $ the {\it extended real numbers} and $ \R_{inf} $ the {\it infinite real numbers}.
\end{definition}

\begin{remark}\mylabel{rem:1}
  In this new notation, $  \Rii =   \R \cup \R_{inf} $ and every $ u \in \Rii $ 
  can be written in a unique way as $ u=a+b \cdot \infty $ for some $ a,b \in \R. $
\end{remark}

\begin{definition}\mylabel{def:2}
  For any  $ u=a+b \cdot \infty \in \Rii $ define the {\it absolute value} of $ u $ as  $ |u|:=|a|+|b| \cdot \infty. $
\end{definition}

\begin{remark}\mylabel{rem:2}
  There is a cannonical projection from the set of random variables $ X: \Omega \rightarrow \Rii $ 
  to the set of random variables $ X: \Omega \rightarrow \mathbb{R}^* $. Each $X(\omega)$ is mapped by the identity on $\R$, 
  positive infinite values (ordering is a part of the construction preceding definition \ref{def:1}) 
  are mapped to $+\infty$ and negative infinite values to $-\infty$.
  Considering of this projection allows us to extend proofs of all theorems 
  for random variables $ X: \Omega \rightarrow \mathbb{R}^* $ to random variables $ X: \Omega \rightarrow \Rii $. 
  More precisely, random variables $ X: \Omega \rightarrow \Rii $ 
  form a model of the theory of random variables $ X: \Omega \rightarrow \mathbb{R}^* $.
\end{remark}

From now on consider random variables $ X: \Omega \rightarrow \Rii $.
The following extension of Integral and Expected values will be sufficient for us.

\begin{definition}\mylabel{def:3}
  Let $ X $ be a random variable that is finite a.e. in $ \Omega $. Then
  $$  \ds{ E(X) = \int_\Omega X(\omega) P(d \omega) := \int_{ \{\omega: X(\omega) \in \R\} } X(\omega) P(d \omega) }. $$
\end{definition}

\begin{lemma}\mylabel{lem:1}
  \begin{myEnumerate}
    \item[(a)]  $ \R_{inf} = \displaystyle\bigcap_{K \in \mathbb{N}} \{u \in \Rii:\; |u| > K\},\;\; $
    \item[(b)]  $ P(X \in \R_{inf}) = \displaystyle\lim_{K\to\infty} P(|X| > K). $  
  \end{myEnumerate}
\end{lemma}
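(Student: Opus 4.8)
First I would establish part (a). The plan is to prove the two inclusions separately. For the inclusion $\R_{inf} \subseteq \bigcap_{K} \{u : |u| > K\}$, take $u \in \R_{inf}$, say $u = a + b\cdot\infty$ with $b \neq 0$ (using Remark \ref{rem:1} and the characterization in Definition \ref{def:1}: membership in $\R_{inf}^+$ or $\R_{inf}^-$ forces $b \neq 0$). Then $|u| = |a| + |b|\cdot\infty$ with $|b| > 0$, and by the lexicographic ordering $|a| + |b|\cdot\infty > K$ for every real $K$, since the coefficient of $\infty$ is strictly positive. For the reverse inclusion, I would argue contrapositively: if $u \notin \R_{inf}$, then $u \in \R$ by Remark \ref{rem:1}, i.e. $u = a + 0\cdot\infty$, so $|u| = |a| \in \R$, and choosing any natural number $K > |a|$ shows $u \notin \{u : |u| > K\}$, hence $u \notin \bigcap_K \{u : |u| > K\}$. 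This part is essentially a direct unwinding of the definitions and the lexicographic order.

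Next, for part (b), the key observation is that the events $\{|X| > K\}$ form a decreasing sequence in $K$ (since $|X(\omega)| > K+1$ implies $|X(\omega)| > K$ under the lexicographic order, as comparing with the real constant $K$ only depends on whether the $\infty$-coefficient is positive, zero, or negative, and on the real part when it is zero). Then I would apply part (a): $\{X \in \R_{inf}\} = \bigcap_{K \in \mathbb{N}} \{|X| > K\}$, so this event is the intersection of a decreasing sequence of measurable sets. Continuity of the probability measure from above then gives $P(X \in \R_{inf}) = P\!\left(\bigcap_K \{|X| > K\}\right) = \lim_{K\to\infty} P(|X| > K)$.

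The one point that needs a little care — and which I'd expect to be the main (minor) obstacle — is measurability: I must check that $\{|X| > K\}$ is an event, i.e. lies in $F$, so that continuity from above applies and the limit on the right-hand side even makes sense. This follows because $X : \Omega \to \Rii$ is a random variable, the absolute value map of Definition \ref{def:2} is order-compatible, and $\{u \in \Rii : |u| > K\}$ is an up-set in the lexicographic order whose preimage under $X$ is measurable; concretely, $\{|X| > K\} = \{X \in \R_{inf}^+\} \cup \{X \in \R_{inf}^-\} \cup \{X \in \R,\ |X| > K\}$, a union of measurable sets. Once measurability is in hand, the rest is the standard continuity-from-above argument, so the proof is short.
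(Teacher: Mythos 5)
Your proposal is correct and follows essentially the same route as the paper: part (a) by unwinding Definitions \ref{def:1} and \ref{def:2} (the paper phrases this as $\R_{inf} = \{u \in \Rii : \forall a \in \R\ (a < |u|)\}$ rather than as two explicit inclusions, but it is the same computation), and part (b) by continuity of $P$ from above applied to the decreasing sets $\{|X| > K\}$. Your added remark on measurability of $\{|X| > K\}$ is a point the paper's proof passes over silently, but it does not change the argument.
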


\begin{proof}
  From definitions \ref{def:1} and \ref{def:2} we have $ \R_{inf} = \{u \in \Rii:\; \forall a \in \R\; (a < |u|) \}, $ 
  which is equivalent to (a). So  $ P(X \in \R_{inf}) $
  \br$
    = P\left(\displaystyle\bigcap_{K \in \mathbb{N}} \{\omega:\; X(\omega) \in \Rii,\; |X(\omega)| > K\}\right) 
    = P\left(\displaystyle\bigcap_{K \in \mathbb{N}} \{\omega:\; |X(\omega)| > K\}\right).
  $
  \br Since $ \left(\{\omega: |X(\omega)| > K\}\right)_{k=1}^\infty $ is a decreasing sequence w.r.t. inclusion, we have
  \br$
    P\left(\displaystyle\bigcap_{K \in \mathbb{N}} \{\omega: |X(\omega)| > K\}\right)
    = \displaystyle\lim_{K\to\infty} P\left(\{\omega: |X(\omega)| > K\}\right)
    = \displaystyle\lim_{K\to\infty} P(|X| > K).
  $
\end{proof}

\bigskip
The following claim is well known and we include it here as a lemma without proof.

\begin{lemma}\mylabel{lem:2}
  \begin{myEnumerate}
    \item[(a)]  If $ \Omega' \subseteq \Omega $ and $ P(\Omega')=1 $ then $ X_n \to X_\infty $\,, in $ L^p(\Omega) $ 
      is equivalent to  $ X_n \to X_\infty $\,, in $ L^p(\Omega') $,
    \item[(b)]  if countably many random variables $ X_n $ are finite a.e. in $ \Omega $ then 
      there is $ \Omega' \subseteq \Omega $ with $ P(\Omega')=1 $, such that all $ X_n $ are finite in $ \Omega' $.
  \end{myEnumerate}
\end{lemma}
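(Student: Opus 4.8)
The plan is to handle the two parts separately, both as routine consequences of countable subadditivity of $P$ together with the fact that integrals are insensitive to $P$-null sets; the statement is isolated as a lemma mainly because of the bookkeeping forced by the extended-real-valued ($\Rii$) setting.

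For part (b), I would first set, for each $n$, the exceptional set $N_n = \{\omega \in \Omega :\ X_n(\omega) \in \R_{inf}\}$; this is measurable by Lemma \ref{lem:1}(a), and $P(N_n) = 0$ because each $X_n$ is finite a.e. (i.e.\ $P(X_n \in \R) = 1$, cf.\ equation \eqref{eq:finite}). Then put $N = \bigcup_{n} N_n$ and $\Omega' = \Omega \setminus N$. Countable subadditivity gives $P(N) \le \sum_n P(N_n) = 0$, hence $P(\Omega') = 1$, and $\Omega' \in F$ since it is a countable combination of measurable sets; by construction $X_n(\omega) \in \R$ for every $n$ and every $\omega \in \Omega'$. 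This is exactly the claim.

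For part (a), the key observation is that the restriction of $P$ to $\Omega'$ is again a probability measure (because $P(\Omega') = 1$), and that for any $\Rii$-valued random variable $Y$ which is finite a.e.\ the set $\{Y \in \R\}$ and its intersection with $\Omega'$ differ by a $P$-null set, so Definition \ref{def:3} yields $\int_\Omega |Y|^p\, dP = \int_{\Omega'} |Y|^p\, dP$. I would apply this with $Y = X_n - X_\infty$, the difference being taken via the vector-space operations of Definition \ref{def:1}: having applied part (b) to the countable family $\{X_n\}_{n=1}^\infty \cup \{X_\infty\}$, all these differences are genuinely finite on a common full-measure set, so each $Y$ is a well-defined, a.e.\ finite $\Rii$-valued random variable and $|Y|$ is handled by Definition \ref{def:2}. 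It then follows that $\|X_n - X_\infty\|_{L^p(\Omega)} = \|X_n - X_\infty\|_{L^p(\Omega')}$ for every $n$, so one sequence of norms converges to $0$ iff the other does, which is the asserted equivalence.

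The only step requiring any care is this last paragraph's verification that the differences $X_n - X_\infty$ and their absolute values make sense in $\Rii$ and are finite on a common set of full measure, after which the classical $L^p$ argument applies verbatim; once that is in place there is no genuine obstacle, which is why the paper states the lemma without proof.
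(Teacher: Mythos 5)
Your proof is correct. The paper explicitly states this lemma without proof (calling it well known), and your argument --- null-set insensitivity of the integral in Definition \ref{def:3} for part (a), and a countable union of null exceptional sets for part (b), with the observation that differences are always defined in the vector space $\Rii$ --- is exactly the standard argument the authors are implicitly relying on.
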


The following proposition shows that a sequence $ x = \{X_n\}_{n=1}^\infty$ of r.v.'s converging in pr. 
does not imply that $ Ax $ converges in pr..    

\begin{proposition}\mylabel{prop:3} Suppose that A satisfies the following conditions:
  \begin{enumerate}
    \item ${\underset{1\le i<\infty}{\text{lub}}{{\sum}^{\infty}_{j=0}}{ |a_{ij}| }=M<\infty}$;  
    \item ${\underset{i\to\infty}{\text{lim}} a_{ij} } = 0 \text{ for } j = 1, 2, \ldots{}$;
    \item ${\underset{i\to\infty}{\text{lim}}{\sum}^{\infty}_{j=1}{ a_{ij} }} = 1$.
  \end{enumerate}
  Then  $ X_n \to X_\infty $  in pr. does not imply   $ (Ax)_n \to X_\infty $ in pr.
\end{proposition}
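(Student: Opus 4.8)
The plan is to construct an explicit counterexample: a probability space, a sequence of random variables converging in probability (indeed to $X_\infty = 0$), and a matrix $A$ satisfying conditions (1)--(3), such that $(Ax)_n$ fails to converge to $0$ in probability. The natural choice for $A$ is the Cesàro matrix $a_{ij} = 1/i$ for $1 \le j \le i$ and $a_{ij}=0$ otherwise (with the zeroth column zero), which visibly satisfies all three hypotheses with $M=1$. The task then reduces to finding a sequence $\{X_n\}$ with $X_n \to 0$ in probability but $\frac1i\sum_{j=1}^i X_j \not\to 0$ in probability.

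First I would recall the standard fact that convergence in probability does not control averages: one wants the $X_n$ to be ``small in probability but occasionally huge.'' A clean way to do this on $\Omega = [0,1]$ with Lebesgue measure is to let $X_n = c_n \mathbf{1}_{E_n}$ where $P(E_n) = p_n \to 0$ but the amplitudes $c_n$ are chosen so large that the Cesàro averages are forced away from $0$ on sets of non-vanishing probability. For instance, take the $X_n$ independent (or use a rotating family of intervals), with $p_n \to 0$ slowly and $c_n$ growing fast enough that, along a subsequence of indices $i$, the event $\{\exists\, j \le i : X_j = c_j \text{ with } c_j/i \ge 1\}$ has probability bounded below by some fixed $\delta > 0$. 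On that event $(Ax)_i \ge 1$, so $(Ax)_i \not\to 0$ in probability. Concretely one can arrange $c_n = n$ and choose $p_n$ decreasing to $0$ but with $\sum p_n = \infty$, so that by a Borel--Cantelli-type argument infinitely many of the late, large $X_j$'s ``fire'' with substantial probability within any window $\{1,\dots,i\}$.

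The steps in order: (i) fix $\Omega=[0,1]$ with Lebesgue measure and define the $X_n$ explicitly (amplitudes $c_n=n$, supports $E_n$ of measure $p_n$, chosen independent or suitably disjoint/rotating); (ii) verify $X_n \to 0$ in probability, which is immediate since $P(|X_n|>\eps) \le p_n \to 0$ for every $\eps>0$; (iii) verify $A$ (the Cesàro matrix) satisfies conditions (1)--(3) of the proposition; (iv) estimate $P\bigl((Ax)_i > \tfrac12\bigr)$ from below by the probability that at least one index $j \in (i/2, i]$ has $X_j = j > i/2$, which by independence is $1 - \prod_{i/2 < j \le i}(1-p_j)$; (v) choose the $p_j$ so that this product stays bounded away from $1$ as $i \to \infty$ (e.g. $p_j \asymp 1/j$ makes $\sum_{i/2<j\le i} p_j \asymp \log 2$, hence the complement probability is bounded below by a constant), concluding $(Ax)_i \not\to 0$ in probability.

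The main obstacle is the bookkeeping in step (iv)--(v): one must make sure the ``large'' values $X_j$ with $j$ comparable to $i$ contribute to the average $(Ax)_i$ with both the right magnitude ($c_j/i$ bounded below) and the right probability (bounded below, not tending to $0$), simultaneously for a sequence of $i$'s tending to infinity. Balancing $c_j$ against the averaging factor $1/i$ while keeping $\sum p_j = \infty$ (so that firings are not too rare) but $p_j \to 0$ (so that convergence in probability holds) is the delicate point; the choice $c_j = j$, $p_j \sim 1/j$ threads this needle, and independence of the $X_j$ lets one compute the relevant probabilities cleanly. An alternative that avoids independence is a deterministic ``moving bump'' construction where $E_j$ is an interval of length $p_j$ that sweeps across $[0,1]$, arranged so that for each $i$ in a suitable subsequence a positive-length set of $\omega$ lies in some $E_j$ with $j$ close to $i$; I would present whichever version makes step (v) shortest.
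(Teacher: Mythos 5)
Your proposal is correct and rests on the same core idea as the paper's proof: take $A$ to be the Ces\`aro matrix and exhibit a ``spike'' sequence $X_n = c_n \mathbf{1}_{E_n}$ with $P(E_n)\to 0$ (so $X_n\to 0$ in probability) but with amplitudes $c_n$ large enough that the averages stay away from $0$ on a set of non-vanishing measure. The difference is in how the lower bound on $P\bigl((Ax)_i > \text{const}\bigr)$ is obtained. The paper (Example \ref{ex:1}) uses the deterministic ``typewriter'' construction: for $n = 2^m+i$ the support of $X_n$ is the dyadic interval $\left[\frac{i}{2^m},\frac{i+1}{2^m}\right)$, so the supports of one full generation $j = 2^{m-1},\dots,2^m-1$ tile $[0,1)$ exactly, and since each amplitude $4^{m-1+i'}$ exceeds $2^{m+1} > n$, the event $\{(Ax)_n > 1\}$ has probability exactly $1$. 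Your primary version instead takes the $E_j$ independent with $p_j \asymp 1/j$ and amplitudes $c_j = j$, and bounds $P\bigl(\bigcup_{i/2<j\le i}E_j\bigr) \ge 1 - \prod(1-p_j) \ge 1 - e^{-\sum p_j}$, which is bounded below by a positive constant since $\sum_{i/2<j\le i}p_j \asymp \log 2$; this suffices, though it yields only a constant rather than probability $1$, and requires constructing independent events of prescribed measures on $[0,1]$ (standard, but an extra step). The ``moving bump'' alternative you mention at the end is essentially the paper's construction. Both routes are sound; the paper's buys a cleaner conclusion (failure with probability one, no independence needed), yours is the more generic probabilistic argument and makes the balancing of amplitude against averaging weight more transparent.
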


This is a well known result in probability theory. To show it, for the convenience of the reader, we provide an example below.

\begin{example}\mylabel{ex:1}
  We take interval $ [0, 1) $ as the sample space. If $ n=2^m+i $, for some given $ m = 1, 2, 3, ..., $ 
  and for some $ 0 \le i < 2^m $, then we define $ X_n $ as follows:
  \begin{eqnarray}
    X_n(\omega) = \left\{
    \begin{array}{ll}
      4^{m+i}, & \text{ if } \omega \in \left[\frac{i}{2^m},\frac{i+1}{2^m}\right) \\
      0, &  \text{ otherwise} \\
    \end{array}
    \right.
  \end{eqnarray}
  It is clear that $ X_n \to 0 $ in pr.
  Taking the Cesaro Summability method $ A $, for $ n \ge 16 $ and $ n=2^m+i $, 
  where $ 0 \le i < 2^m $, noting $ m>3 $ and so $ 4^{m-1} > 2^{m+1} $, we have

  \begin{eqnarray*}
    \left(\frac{\sum_{j=1}^n X_j}{n} > 1 \right)
      &=& \left(\sum_{j=1}^n X_j > n \right)
      \supseteq \left(\sum_{j=1}^n X_j > 2^{m+1} \right)
      \\
      &\supseteq& \left(\sum_{j=2^{m-1}}^{2^m - 1} X_j > 2^{m+1} \right)
      = \left(\sum_{i=0}^{2^{m-1} - 1} X_{2^{m-1} + i} > 2^{m+1} \right)
      \\
      &=& \bigcup_{i=0}^{2^{m-1} - 1} \left[\frac{i}{2^{m-1}},\frac{i+1}{2^{m-1}}\right) = [0,1).
  \end{eqnarray*}

  \noindent It implies $ \ds{ P\left(\frac{\sum_{j=1}^n X_j}{n} > 1 \right) = 1 } $ and shows that 
   $ \ds\lim_{n \to \infty}P\left(\frac{\sum_{j=1}^n X_j}{n} > 1 \right) = 1 > 0.$

  \noindent Hence $ \displaystyle{ \frac{\sum_{j=1}^n X_j}{n} } $ does not converge to $ 0 $ in pr..
\end{example}

\bigskip
The above example demonstrates that even with Cesaro Summability method $ A $, 
$ X_n \to X_\infty $ in pr. does not imply that $ (Ax)_n \to X_\infty $ in pr.. 
Hence to assure $ (Ax)_n \to X_\infty $ in pr. 
for any given regular summability method $ A $, 
a stronger condition on the sequence $ (X_n) $  than $ X_n \to X_\infty $ in pr. is needed.

In all the following,  ${X_n,\; n=1,2,...}$ and ${X_\infty}$ 
is a sequence of random variables on the same probability space ${\Omega}$ and $x=(X_1,X_2, \dots{})$.
We review the definition of \emph{almost sure covergence} now.

\begin{definition}\mylabel{def:asConv}
  ${ X_n \suparrow X_\infty }$  if for any ${\lambda > 0}$
  \br${ \displaystyle\lim_{n\to\infty}\, P\left( \displaystyle\sup_{n \le m < \infty} |X_m - X_\infty| > \lambda \right) = 0}$.
  
\end{definition}

We have easily the following corollary, which is well-known.

\begin{corollary}\mylabel{cor:3}
  ${ X_n \suparrow X_\infty}$  implies $ {X_n \rightarrow X_\infty }$  in pr.
\end{corollary}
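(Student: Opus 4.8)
The plan is to derive convergence in probability directly from the definition of almost sure convergence (Definition \ref{def:asConv}) by exploiting the obvious set inclusion relating a single-term deviation to the supremum of the tail. First I would fix $\lambda > 0$ and an arbitrary index $n$, and observe that for every $\omega \in \Omega$, if $|X_n(\omega) - X_\infty(\omega)| > \lambda$ then certainly $\sup_{n \le m < \infty} |X_m(\omega) - X_\infty(\omega)| > \lambda$, since the term $m = n$ is among those over which the supremum is taken. This gives the event inclusion
\begin{eqnarray*}
  \left( |X_n - X_\infty| > \lambda \right) \subseteq \left( \sup_{n \le m < \infty} |X_m - X_\infty| > \lambda \right),
\end{eqnarray*}
and hence, by monotonicity of the probability measure $P$,
\begin{eqnarray*}
  P\left( |X_n - X_\infty| > \lambda \right) \le P\left( \sup_{n \le m < \infty} |X_m - X_\infty| > \lambda \right).
\end{eqnarray*}

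Next I would pass to the limit as $n \to \infty$. By Definition \ref{def:asConv}, the hypothesis $X_n \suparrow X_\infty$ says precisely that the right-hand side tends to $0$. Since the left-hand side is nonnegative and bounded above by a quantity going to $0$, the squeeze argument gives $\lim_{n\to\infty} P\left( |X_n - X_\infty| > \lambda \right) = 0$. Because $\lambda > 0$ was arbitrary, this is exactly the assertion that $X_n \to X_\infty$ in pr., completing the argument.

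There is no real obstacle here; the only point requiring a word of care is that all random variables are finite a.e. (condition \eqref{eq:finite}), so the differences $X_m - X_\infty$ and their absolute values are well-defined a.e. and measurable, and the supremum over the countable tail $\{m : n \le m < \infty\}$ is itself a measurable function, so all the events written above are genuine events in $F$. Given this, the inclusion and the monotonicity of $P$ are the entire content of the proof.
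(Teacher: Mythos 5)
Your proof is correct and is precisely the standard argument: the paper itself offers no proof of Corollary \ref{cor:3}, merely noting that it follows ``easily'' and is well-known, and the inclusion $\left( |X_n - X_\infty| > \lambda \right) \subseteq \left( \sup_{n \le m < \infty} |X_m - X_\infty| > \lambda \right)$ together with monotonicity of $P$ is exactly the intended one-line justification. Your additional remark about finiteness a.e.\ and measurability of the countable supremum is a reasonable bit of care given the paper's extended-real-valued setting, and nothing further is needed.
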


\begin{example}\mylabel{ex:2}
  Let set $ S $ is such that $ P(S)=\eps > 0 $ and 
  let $ X_1(\omega) = \infty $ for $ \omega \in S $ and zero otherwise,
  $ X_n(\omega) = 0 $ everywhere for $ n = 2,3, ... , \infty $.
  Here $ X_n \to X_\infty $ even in the strongest sense one can think of but,
  for $ A $ that has all elements in the first column nonzero,
  we still don't have $ (Ax)_n \to X_\infty $.

  Here $ X_1 $ doesn't satisfy condition (\ref{eq:finite}).
  While in our extended real numbers, we are able to do algebra with infinities,
  this example shows that the condition (\ref{eq:finite}) is still needed.
\end{example}

\section{Some new stochastic summability results}\mylabel{sec:3}

\begin{proposition}\mylabel{prop:2}
  Let $ X_n,\, X_\infty $  be finite a.e., that is $ P(X_n \in \R)=1, P(X_\infty \in \R)=1, $
  and  $ A $ define a regular method of summability, that is
  \begin{enumerate}
  \item ${\underset{1\le i<\infty}{\text{lub}}{{\sum}^{\infty}_{j=0}}{ |a_{ij}| }=M<\infty}$;  
  \item ${\underset{i\rightarrow\infty}{\text{lim}} a_{ij} } = 0 \text{ for } j = 1, 2, \ldots{}$;
  \item ${\underset{i\rightarrow\infty}{\text{lim}}{\sum}^{\infty}_{j=1}{ a_{ij} }} = 1$.
  \end{enumerate}
  Then  $ X_n \rightarrow X_\infty $  a.e. in $ \Omega $ implies 
  $ (Ax)_n \rightarrow X_\infty $  a.e. in $ \Omega $.
\end{proposition}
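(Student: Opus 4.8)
The plan is to reduce the almost-everywhere statement to a pointwise statement about ordinary sequences of real numbers, and then invoke the classical Toeplitz--Silverman theorem in the form already recorded in Proposition~\ref{prop:1}. First I would use Lemma~\ref{lem:2}(b): since countably many random variables $X_1,X_2,\dots,X_\infty$ are finite a.e.\ in $\Omega$, there is $\Omega'\subseteq\Omega$ with $P(\Omega')=1$ on which every $X_n$ and $X_\infty$ takes a genuine real value. Shrinking $\Omega'$ further by intersecting with the a.e.-convergence set (still of probability $1$), I may assume that for every $\omega\in\Omega'$ the numerical sequence $\{X_n(\omega)\}_{n=1}^\infty$ lies in $\R$ and converges to $X_\infty(\omega)\in\R$.

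Next I would fix $\omega\in\Omega'$ and apply the classical regular-summability result to the real sequence $x(\omega)=(X_1(\omega),X_2(\omega),\dots)$. Conditions 1--3 on $A$ are exactly the Toeplitz conditions in the ``limit-preserving'' normalization (row sums $\to 1$, column limits $\to 0$, uniformly bounded absolute row sums); these are a special case of conditions 1--3 of Proposition~\ref{prop:1}, and with column limits $0$ and row-sum limit $1$ the classical theorem gives $\tilde X_\infty(\omega)=X_\infty(\omega)$, i.e.
\[
  (Ax)_i(\omega)=\sum_{j=1}^\infty a_{ij}X_j(\omega)\xrightarrow[i\to\infty]{} X_\infty(\omega).
\]
One should check that the series $\sum_j a_{ij}X_j(\omega)$ converges absolutely for each fixed $i$ and $\omega\in\Omega'$: this follows from condition~1 together with the boundedness of the convergent sequence $\{X_j(\omega)\}_j$, so $(Ax)_i(\omega)$ is a well-defined real number, hence $(Ax)_i$ is a genuine finite-valued random variable on $\Omega'$. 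Since this holds for every $\omega\in\Omega'$ and $P(\Omega')=1$, we conclude $(Ax)_i\to X_\infty$ a.e.\ in $\Omega$.

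The only genuine subtlety — and the step I expect to need the most care — is measurability and the passage through the infinite matrix product when entries could a priori be infinite: one must confirm that on the full-measure set $\Omega'$ everything stays inside $\R$, so that the classical scalar theorem applies verbatim and no arithmetic with infinite real numbers (the construction following Definition~\ref{def:1}) is actually invoked. Once $\Omega'$ is chosen as above this is automatic, because each $(Ax)_i$ is an absolutely convergent series of real-valued measurable functions and hence measurable and finite on $\Omega'$. Everything else is a direct quotation of the deterministic Toeplitz--Silverman theorem applied separately at each sample point, so no further estimation is required.
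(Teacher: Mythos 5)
Your proposal is correct and follows essentially the same route as the paper: restrict to a probability-one set (obtained from Lemma \ref{lem:2}(b) together with the a.e.-convergence set) on which all $X_n$ and $X_\infty$ are real and $X_n(\omega)\to X_\infty(\omega)$, then apply the classical Silverman--Toeplitz theorem pointwise. Your added remarks on absolute convergence of $\sum_j a_{ij}X_j(\omega)$ and measurability are details the paper leaves implicit, but they do not change the argument.
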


\begin{proof}  
  From Lemma \ref{lem:2}(b) and the assumptions, we have the existence of a sequence of sets $ S_n $ 
  and sets $ S_\infty, S: $
  \begin{eqnarray*}
    P(|X_n| \in R)=1 &\Rightarrow& \exists S_n \subseteq \Omega:\; P(S_n)=1 \wedge (\omega \in S_n \Rightarrow |X_n(\omega)| \in \R)\\
    P(|X_\infty| \in R)=1 &\Rightarrow& \exists S_\infty \subseteq \Omega:\; P(S_\infty)=1 \wedge (\omega \in S_\infty \Rightarrow |X_\infty(\omega)| \in \R)\\
    P(X_n \rightarrow X_\infty)=1 &\Rightarrow& \exists S \subseteq \Omega:\; P(S)=1 \wedge (\omega \in S \Rightarrow X_n(\omega) \rightarrow X_\infty(\omega))
  \end{eqnarray*}
  Consider $ T_n = \bigcap_{i=1}^n S_i \cap S \cap S_\infty $. 
  The sequence $ (T_n)_{i=1}^\infty $ is decreasing w.r.t. inclusion, 
  so $ P(T) = P(\bigcap T_n) = \lim_{n \rightarrow \infty} P(T_n)=1 $.
  Now, for every $ \omega \in T $ we have $ |X_n(\omega)| \in \R $ and $ |X_\infty(\omega)| \in \R $. 
  So the Silverman-Toeplitz theorem applies for each $ n $ 
  and consequently $ (Ax)_n \rightarrow X_\infty $  a.e. in $ \Omega $.
\end{proof}

\begin{proposition}\mylabel{prop:4}
  Suppose that A is a regular method of summability and its norm $|A|=M$. 
  That means, by Silverman-Toeplitz Theorem:
  \begin{enumerate}
    \item ${\underset{1\le i<\infty}{\text{lub}}{{\sum}^{\infty}_{j=0}}{ |a_{ij}| }=M<\infty}$;  
    \item ${\underset{i\rightarrow\infty}{\text{lim}} a_{ij} } = 0 \text{ for } j = 1, 2, \ldots{}$;
    \item ${\underset{i\rightarrow\infty}{\text{lim}}{\sum}^{\infty}_{j=1}{ a_{ij} }} = 1$.
  \end{enumerate}
  Then  $ X_n \suparrow X_\infty $  implies $ (Ax)_n \suparrow X_\infty $\,, whenever
  $$  X_n,\, X_\infty  \text{ are finite a.e., that is, } P(X_n \in \R)=1, P(X_\infty \in \R)=1. $$
\end{proposition}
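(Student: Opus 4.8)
The plan is to reduce the statement to the case of real-valued random variables (using Lemma~\ref{lem:2}(b) to pass to a full-measure subset $\Omega'$ on which every $X_n$ and $X_\infty$ is finite, exactly as in the proof of Proposition~\ref{prop:2}), and then to work directly from Definition~\ref{def:asConv}. So fix $\lambda > 0$ and set $Y_m = X_m - X_\infty$; by hypothesis $P(\sup_{m \ge n}|Y_m| > \mu) \to 0$ as $n \to \infty$ for every $\mu > 0$. We want to show $P(\sup_{i \ge n}|(Ax)_i - X_\infty| > \lambda) \to 0$. The first step is to write $(Ax)_i - X_\infty = \sum_{j=1}^\infty a_{ij}(X_j - X_\infty) + \bigl(\sum_{j=1}^\infty a_{ij} - 1\bigr)X_\infty + a_{i0}\cdot 0$, i.e. decompose the tail error into a "weighted average of the $Y_j$" part and a "row-sum defect" part that is controlled by condition~(3).

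The core estimate is the first part: I would fix an integer $N$ and split $\sum_{j=1}^\infty a_{ij}Y_j = \sum_{j=1}^{N} a_{ij}Y_j + \sum_{j=N+1}^\infty a_{ij}Y_j$. For the tail sum, $|\sum_{j>N} a_{ij}Y_j| \le M \sup_{j > N}|Y_j| \le M\sup_{j \ge N+1}|Y_j|$ uniformly in $i$, using condition~(1); crucially this bound does not depend on $i$, so it controls the supremum over $i \ge n$ at once. For the head sum $\sum_{j=1}^N a_{ij}Y_j$, I would use condition~(2): since there are only finitely many columns $j = 1,\dots,N$ and each $a_{ij} \to 0$ as $i \to \infty$, for $i$ large this head sum is small with probability close to $1$ — more carefully, on the event $\{\sup_{m\ge 1}|Y_m| \le R\}$ (which has probability $\to 1$ as $R \to \infty$ since each $Y_m$ is finite a.e. and $\sup$ is attained in a measurable sense) one has $|\sum_{j=1}^N a_{ij}Y_j| \le R\sum_{j=1}^N |a_{ij}|$, and $\sup_{i \ge n}\sum_{j=1}^N|a_{ij}|$... here is the subtlety: condition~(2) gives $a_{ij}\to 0$ for each fixed $j$, hence $\sum_{j=1}^N |a_{ij}| \to 0$ as $i\to\infty$, so $\sup_{i\ge n}\sum_{j=1}^N|a_{ij}| \to 0$ as $n \to \infty$. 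Thus the head contribution to $\sup_{i\ge n}|\cdots|$ can be made $< \lambda/4$ for $n$ large, \emph{after} first choosing $R$ large on a set of probability $> 1 - \eps$. The defect term $(\sum_j a_{ij} - 1)X_\infty$ is handled the same way: $|\sum_j a_{ij} - 1| \to 0$, so $\sup_{i\ge n}|\sum_j a_{ij}-1| \to 0$, and on $\{|X_\infty|\le R\}$ this term is uniformly small for $n$ large.

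Assembling: given $\eps > 0$ and $\lambda > 0$, first pick $R$ so that $P(\sup_{m}|Y_m| > R) < \eps/2$ and $P(|X_\infty| > R) < \eps/2$ (finiteness a.e. plus Lemma~\ref{lem:1}(b)-type reasoning; really just that these are finite random variables so $P(\cdot > R)\to 0$). Next pick $N$ so that on the good event $M\sup_{j\ge N+1}|Y_j| \le \lambda/3$ — wait, this needs $P(\sup_{j \ge N+1}|Y_j| > \lambda/(3M))$ small, which is exactly the almost-sure convergence hypothesis, giving another $\eps$-loss; so interleave the choices: pick $N$ with $P(\sup_{j\ge N+1}|Y_j| > \lambda/(3M)) < \eps$, then on the complement the tail is $\le \lambda/3$ uniformly in $i$. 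Finally pick $n_0$ so that for $n \ge n_0$, $\sup_{i\ge n}\sum_{j=1}^N|a_{ij}| \le \lambda/(3R)$ and $\sup_{i\ge n}|\sum_j a_{ij}-1| \le \lambda/(3R)$. Then for $n \ge n_0$, off an event of probability $< 3\eps$ we get $\sup_{i\ge n}|(Ax)_i - X_\infty| \le \lambda$, so $\limsup_n P(\sup_{i\ge n}|(Ax)_i - X_\infty| > \lambda) \le 3\eps$; let $\eps \to 0$.

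The main obstacle is the interchange of "$\sup$ over the index $i$" with the probabilistic estimates: one must ensure every bound on the head/defect/tail pieces is \emph{uniform in $i \ge n$} (for the tail this is automatic from condition~(1); for head and defect it requires upgrading the pointwise-in-$i$ limits from conditions~(2),(3) to $\sup_{i\ge n}(\cdots)\to 0$, which is valid because a sequence tending to a limit has $\sup$ of its tail tending to that limit). A secondary care point is that $\sup_{m}|Y_m|$ and $\sup_{j\ge N+1}|Y_j|$ must be genuine (measurable) random variables and that the event $\{\sup_m|Y_m|>R\}$ really does have probability $\to 0$; this follows since each $Y_m$ is finite a.e. and the a.s.-convergence hypothesis with $n=1$ already bounds $P(\sup_{m\ge 1}|Y_m|>\lambda)$ — actually Definition~\ref{def:asConv} only asserts the $n\to\infty$ limit is $0$, not finiteness of the $n=1$ term, so one should instead argue: $\sup_{m\ge 1}|Y_m| = \max(|Y_1|,\dots,|Y_{k-1}|, \sup_{m\ge k}|Y_m|)$ and choose $k$ with $P(\sup_{m\ge k}|Y_m|>1) < \eps/2$, then the remaining finitely many $|Y_i|$ are each finite a.e., so the max is finite a.e., giving $P(\sup_{m\ge 1}|Y_m|>R)<\eps$ for $R$ large.
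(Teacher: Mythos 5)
Your proposal is correct and follows essentially the same route as the paper's proof: the same decomposition of $(Ax)_i - X_\infty$ into a head sum, a tail sum, and a row-sum defect term, with conditions (2), (1), (3) controlling them respectively, uniformly in $i\ge n$, and a high-probability boundedness event absorbing the head and defect contributions while the almost-sure-convergence hypothesis handles the tail. The only differences are bookkeeping: you condition on $\{\sup_m |Y_m|\le R\}$ (which forces your extra argument that this countable supremum is finite a.e.), whereas the paper conditions only on the finitely many $|X_k|$, $k<N_2$, together with $|X_\infty|$ being at most $K$, and the constants are chosen in a slightly different order.
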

\begin{proof}
  By Lemma \ref{lem:1} and from finiteness a.e., we have conditions
  \begin{myEnumerate}
    \item[(*)] ${\displaystyle\lim_{K\to\infty} P(|X_n| > K) = 0,\; \displaystyle\lim_{K\to\infty} P(|X_\infty| > K) = 0}$
  \end{myEnumerate}
  For any given ${ \varepsilon,\delta > 0}$, we have to show that there exists ${N > 1}$,
  such that for all ${n > N}$, the following inequality holds
  \begin{equation*}
    P\left( \displaystyle\sup_{n \le i < \infty} |(Ax)_i - X_\infty| > \delta \right) < \varepsilon.
  \end{equation*}
  For the given ${\varepsilon,\delta > 0}$,
  $ X_n \suparrow X_\infty $  implies that  there exists ${ N_2 > 1}$, such that
  \begin{equation}\mylabel{eqXsuparrow}
    P\left( \displaystyle\sup_{n \le k < \infty} |X_k - X_\infty| > \frac{\delta}{3M} \right) < \frac{\varepsilon}{2}, \text{ for all } n \ge N_2.
  \end{equation}
  From conditions (*), for the already known $\varepsilon$ and $N_2$ there exists ${K > 1}$, such that
  \begin{equation}\mylabel{eq1}
    P\left( \max_{\ontop{1 \le k < N_2}{or\,k=\infty}} |X_k| > K \right) < \frac{\varepsilon}{2}
  \end{equation}
  For this fixed ${K > 1}$, we have
  \begin{eqnarray}\mylabel{eqsupAx}
  && { P\left( \sup_{n \le i < \infty} |(Ax)_i - X_\infty| > \delta \right) \hphantom{ aa } } \nonumber \\
  & = &  P\left( \sup_{n \le i < \infty} |(Ax)_i - X_\infty| > \delta:\;  \max_{\ontop{1 \le k < N_2}{or\,k=\infty}} |X_k| \leq K  \right)
    \\ & & \hspace{2.5in} +  P\left( \sup_{n \le i < \infty} |(Ax)_i - X_\infty| > \delta:\;  \max_{\ontop{1 \le k < N_2}{or\,k=\infty}} |X_k| > K  \right)  \nonumber \\
  & < &  P\left( \sup_{n \le i < \infty} |(Ax)_i - X_\infty| > \delta:\;  \max_{\ontop{1 \le k < N_2}{or\,k=\infty}} |X_k| \leq K  \right)
      + \frac{\varepsilon}{2}.
  \end{eqnarray}
  For the fixed $N_2$, there exists $N_3 \ge N_2$, from condition 2. in this proposition, such that
  \begin{equation*}
    |a_{nj}| \le \frac{\delta}{6KN_2}, \text{ for all } n \ge N_3, \text{ and all } 1 \le j < N_2.
  \end{equation*}
  From condition 3. in this proposition, there exists $N \ge N_3$, such that
  \begin{equation*}
    \abs{ \sum_{j=1}^{\infty} a_{nj} - 1 } \le \frac{\delta}{3K}, \text{ for all } n \ge N, \text{ and so }
    \sup_{n \le i < \infty} \abs{ \sum_{j=1}^{\infty} a_{ij} - 1 } \le \frac{\delta}{3K}.
  \end{equation*}
  Now, for all $n \ge N$, we have
  \begin{eqnarray*}\mylabel{eq3}
  && { \set{ \sup_{n \le i < \infty} \abs{(Ax)_i - X_\infty} > \delta:\;  \max_{\ontop{1 \le k < N_2}{or\,k=\infty}} |X_k| \leq K  } } \\
    & = & \set{ \sup_{n \le i < \infty}  \abs{
	\sum_{j=1}^{\infty} a_{ij} X_j - \sum_{j=1}^{\infty} a_{ij} X_\infty
	+ \left( \sum_{j=1}^{\infty} a_{ij} -1 \right) X_\infty
      }  > \delta:\;  \max_{\ontop{1 \le k < N_2}{or\,k=\infty}} |X_k| \leq K  } \\
    & = & \set{ \sup_{n \le i < \infty} \abs{
	\sum_{j=1}^{\infty} a_{ij} (X_j - X_\infty)
	+ \left( \sum_{j=1}^{\infty} a_{ij} -1 \right) X_\infty
      }  > \delta:\;  \max_{\ontop{1 \le k < N_2}{or\,k=\infty}} |X_k| \leq K  } \\
    & = & \set{ \sup_{n \le i < \infty} \abs{
	\sum_{j=1}^{N_2-1} a_{ij} (X_j - X_\infty) + \sum_{j=N_2}^{\infty} a_{ij} (X_j - X_\infty) 
	+ \left( \sum_{j=1}^{\infty} a_{ij} -1 \right) X_\infty
      } > \delta:\;  \max_{\ontop{1 \le k < N_2}{or\,k=\infty}} |X_k| \leq K  } \\
    & \subseteq & \left[
	\set{ \sup_{n \le i < \infty}  \abs{ \sum_{j=1}^{N_2 - 1} a_{ij} (X_j - X_\infty) } > \frac{\delta}{3} }
	\bigcup \set{ \sup_{n \le i < \infty}  \abs{ \sum_{j=N_2}^{\infty} a_{ij} (X_j - X_\infty) } > \frac{\delta}{3} }
      \right. \\
    & &   \left.  \bigcup \set{ \sup_{n \le i < \infty}  \abs{ \left( \sum_{j=1}^{\infty} a_{ij} -1 \right) X_\infty } > \frac{\delta}{3} }
      \right]  \bigcap \set{ \max_{\ontop{1 \le k < N_2}{or\,k=\infty}} |X_k| \leq K } \\
    & \subseteq & \left[
	\set{ \sup_{n \le i < \infty}  \sum_{j=1}^{N_2 - 1} |a_{ij}| (|X_j| + |X_\infty|)  > \frac{\delta}{3} }
	\bigcup \set{ \sup_{n \le i < \infty}  \sum_{j=N_2}^{\infty} |a_{ij}| \abs{X_j - X_\infty} > \frac{\delta}{3} }
      \right. \\
    & &   \left.  \bigcup \set{ \sup_{n \le i < \infty}  \abs{ \left( \sum_{j=1}^{\infty} a_{ij} -1 \right) } |X_\infty| > \frac{\delta}{3} }
      \right]  \cap \;\Omega \\ 
    & \subseteq & \left[
	\set{ \sup_{n \le i < \infty}  \sum_{j=1}^{N_2 - 1} \frac{\delta}{6KN_2} 
	    \left( \max_{1 \le k < N_2} |X_k| + |X_\infty| \right)  > \frac{\delta}{3} } 
      \right. \\
    & &   \left.  \bigcup \set{ \sup_{n \le i < \infty}  \sum_{j=N_2}^{\infty} |a_{ij}|  \sup_{N_2 \le k < \infty} \abs{X_k - X_\infty} > \frac{\delta}{3} }
      \bigcup \set{  \frac{\delta}{3K} |X_\infty| > \frac{\delta}{3} }
      \vphantom{\sum_{j=1}^{N_2 - 1}} \right] \\ 
    & \subseteq & \left[
	\set{ \left(\max_{1 \le k < N_2} |X_k| + |X_\infty| \right)  > 2K }
	\right. \left. \cup \set{   \sup_{N_2 \le k < \infty} \abs{X_k - X_\infty} > \frac{\delta}{3M} }
	\cup \set{ |X_\infty| > K  \vphantom{\max_{1 \le k < N_2}} }
      \right]  \\  
    & &  \\ 
    & \subseteq & \;\; \emptyset \cup \set{   \sup_{N_2 \le k < \infty} \abs{X_k - X_\infty} > \frac{\delta}{3M} } \cup \emptyset.
  \end{eqnarray*}
  Applying probability to these sets and by (\ref{eqXsuparrow}) we have: 
  \begin{eqnarray*}
    P\set{ \sup_{n \le i < \infty} \abs{(Ax)_i - X_\infty} > \delta:\;  \max_{\ontop{1 \le k < N_2}{or\,k=\infty}} |X_k| \leq K  }
    < P\set{   \sup_{N_2 \le k < \infty} \abs{X_k - X_\infty} > \frac{\delta}{3M} }
    < \frac{\varepsilon}{2}
  \end{eqnarray*}
  Now going back to (\ref{eqsupAx}) we can finish our proof:
  \begin{equation*}
    P\left( \sup_{n \le i < \infty} |(Ax)_i - X_\infty| > \delta \right)
    < \frac{\varepsilon}{2} + \frac{\varepsilon}{2} = \varepsilon.
  \end{equation*}
\end{proof}

If we take the Cesaro Summability method $ A $, then we have the following corollary of Proposition \ref{prop:4}.

\begin{corollary}\mylabel{cor:4}  
  For $ n = 1, 2, ..., $ let random variables $ X_n, X_\infty $ satisfy
  \begin{myEnumerate}
    \item[(a)]  $ X_n,\, X_\infty $  are finite a.e.,
    \item[(b)]  $ X_n \suparrow X_\infty $ .  
  \end{myEnumerate}
  Then $ \displaystyle{ \frac{\sum_{i=1}^n X_i}{n} \suparrow X_\infty } $ . 
\end{corollary}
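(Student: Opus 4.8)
The plan is to recognize the Cesàro method as a particular regular matrix in the sense of Proposition \ref{prop:4} and then invoke that proposition directly, so that almost no new work is required. Concretely, I would fix the matrix $A=(a_{ij})$, $i=1,2,\dots$, $j=0,1,2,\dots$, defined by $a_{ij}=\tfrac{1}{i}$ for $1\le j\le i$ and $a_{ij}=0$ otherwise. With this choice one has $(Ax)_i=\tfrac{1}{i}\sum_{j=1}^{i}X_j$, which is exactly the Cesàro mean appearing in the statement; note also that a finite sum of random variables that are each finite a.e. is again finite a.e., so each $(Ax)_i$ is finite a.e.

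Next I would verify the three Silverman--Toeplitz conditions of Proposition \ref{prop:4} for this $A$. For condition 1, every row satisfies $\sum_{j=0}^{\infty}|a_{ij}|=\sum_{j=1}^{i}\tfrac{1}{i}=1$, so $\operatorname{lub}_{1\le i<\infty}\sum_{j=0}^{\infty}|a_{ij}|=1=:M<\infty$. For condition 2, for each fixed $j$ we have $a_{ij}=\tfrac{1}{i}\to 0$ as soon as $i\ge j$, hence $\lim_{i\to\infty}a_{ij}=0$. For condition 3, $\sum_{j=1}^{\infty}a_{ij}=1$ for every $i$, so $\lim_{i\to\infty}\sum_{j=1}^{\infty}a_{ij}=1$. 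Thus $A$ is a regular method of summability with norm $|A|=M=1$.

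Finally, hypotheses (a) and (b) of the corollary say precisely that $X_n,X_\infty$ are finite a.e. and $X_n\suparrow X_\infty$, which are exactly the hypotheses of Proposition \ref{prop:4} for the matrix $A$ above. Applying that proposition yields $(Ax)_n\suparrow X_\infty$, i.e. $\tfrac{1}{n}\sum_{i=1}^{n}X_i\suparrow X_\infty$, which is the claim. There is essentially no obstacle here beyond bookkeeping; the only point that calls for a moment's care is the column indexing, since the paper's matrices have columns starting at $j=0$ while the averaged sequence starts at $X_1$. This is harmless: one simply takes the zeroth column of $A$ to be identically zero, so that $\sum_{j=0}^{\infty}a_{ij}=\sum_{j=1}^{\infty}a_{ij}$ and all three conditions, as well as the conclusion of Proposition \ref{prop:4} (whose proof only ever uses the sums $\sum_{j=1}^{\infty}$), go through unchanged.
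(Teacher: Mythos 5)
Your proof is correct and matches the paper's approach exactly: the paper presents this as an immediate corollary of Proposition \ref{prop:4} obtained by taking $A$ to be the Ces\`aro matrix, which is precisely what you do (with the added, and welcome, explicit verification of the three Silverman--Toeplitz conditions and the indexing remark).
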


\begin{proposition}\mylabel{prop:5}
  Let $ p \ge 1 $. Suppose that $ A $ satisfies the following conditions:
  \begin{enumerate}
    \item ${\underset{1\le i<\infty}{\text{lub}}{{\sum}^{\infty}_{j=0}}{ |a_{ij}| }=M<\infty}$;  
    \item ${\underset{i\rightarrow\infty}{\text{lim}} a_{ij} } = 0 \text{ for } j = 1, 2, \ldots{}$;
    \item ${\underset{i\rightarrow\infty}{\text{lim}}{\sum}^{\infty}_{j=1}{ a_{ij} }} = 1$.
  \end{enumerate}
  Then $ X_n \to X_\infty $\,, in $ L^p(\Omega) $, implies  $ (Ax)_n \to X_\infty $\,, in $ L^p(\Omega) $, whenever  
  $$  X_n,\, X_\infty  \text{ are finite a.e., that is, } P(X_n \in \R)=1, P(X_\infty \in \R)=1. $$
\end{proposition}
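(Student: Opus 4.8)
The plan is to read Proposition~\ref{prop:5} as the Banach-space analogue of the Silverman--Toeplitz theorem for the space $ L^p(\Omega) $, and to reduce it to that classical statement after two preliminary reductions. First, $ X_n \to X_\infty $ in $ L^p(\Omega) $ presupposes $ X_n,X_\infty \in L^p(\Omega) $, so each $ \|X_n\|_p $ and $ \|X_\infty\|_p $ is a finite real number; moreover, by Lemma~\ref{lem:2}(b) there is an $ \Omega' \subseteq \Omega $ with $ P(\Omega')=1 $ on which all of $ X_1,X_2,\ldots $ and $ X_\infty $ are finite, and by Lemma~\ref{lem:2}(a) convergence in $ L^p(\Omega) $ is equivalent to convergence in $ L^p(\Omega') $. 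Restricting everything to $ \Omega' $ we are genuinely inside the Banach space $ L^p(\Omega') $ and may use the triangle inequality for $ \|\cdot\|_p $, for finite sums and for series, without having to reason about infinite values.

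Second, I would check that $ (Ax)_n = \sum_{j=1}^{\infty} a_{nj} X_j $ is a well-defined element of $ L^p(\Omega) $. Put $ c_j := \|X_j - X_\infty\|_p $; the hypothesis gives $ c_j \to 0 $, hence $ \|X_j\|_p \le c_j + \|X_\infty\|_p $ is bounded, say by $ C $. Since $ \sum_{j=1}^{\infty} |a_{nj}|\, E|X_j| \le \sum_{j=1}^{\infty} |a_{nj}|\,\|X_j\|_p \le MC < \infty $ by condition 1, the series $ \sum_{j=1}^{\infty} a_{nj} X_j(\omega) $ converges absolutely for $ P $-almost every $ \omega $, and, being an absolutely convergent series in $ L^p $, it defines $ (Ax)_n \in L^p(\Omega) $ with $ \|(Ax)_n\|_p \le MC $.

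The heart of the argument then repeats the decomposition already used in the proof of Proposition~\ref{prop:4}:
\[
  (Ax)_n - X_\infty \;=\; \sum_{j=1}^{\infty} a_{nj}(X_j - X_\infty) \;+\; \set{\sum_{j=1}^{\infty} a_{nj} - 1}\,X_\infty ,
\]
whence the triangle inequality in $ L^p $ gives
\[
  \|(Ax)_n - X_\infty\|_p \;\le\; \sum_{j=1}^{\infty} |a_{nj}|\,c_j \;+\; \abs{\sum_{j=1}^{\infty} a_{nj} - 1}\;\|X_\infty\|_p .
\]
The second term tends to $ 0 $ by condition 3. For the first I would invoke the classical Toeplitz lemma: if $ c_j \to 0 $, $ \sup_i \sum_{j=1}^{\infty} |a_{ij}| \le M < \infty $, and $ a_{ij} \to 0 $ as $ i \to \infty $ for each fixed $ j $ (condition 2), then $ \sum_{j=1}^{\infty} |a_{nj}|\,c_j \to 0 $ as $ n \to \infty $. (Given $ \eps > 0 $, choose $ J $ with $ c_j < \eps/(2M) $ for $ j \ge J $; then $ \sum_{j \ge J} |a_{nj}|\,c_j \le \eps/2 $ uniformly in $ n $, while $ \sum_{1 \le j < J} |a_{nj}|\,c_j $ is a fixed finite sum of terms each tending to $ 0 $ as $ n \to \infty $, hence is $ < \eps/2 $ for all large $ n $.) Combining the two estimates gives $ \|(Ax)_n - X_\infty\|_p \to 0 $, which by Lemma~\ref{lem:2}(a) is precisely $ (Ax)_n \to X_\infty $ in $ L^p(\Omega) $.

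The only step I expect to require any care is the first reduction --- certifying that the $ X_n $, a priori $ \Rii $-valued, may be treated as honest elements of the Banach space $ L^p $ --- together with the justification (via the bound $ MC < \infty $ of the second step and the monotone/dominated convergence theorem) that the expectation commutes with the infinite sum defining $ (Ax)_n $. Once that is in place, the remainder is just the scalar Silverman--Toeplitz proof with $ \|\cdot\|_p $ in place of the absolute value; in contrast with the ``in probability'' case of Proposition~\ref{prop:3}, no strengthening of the hypothesis on $ (X_n) $ is needed, because $ L^p $-convergence already furnishes the uniform control that the Toeplitz lemma requires.
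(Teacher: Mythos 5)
Your proposal is correct and follows essentially the same route as the paper: the identical decomposition $(Ax)_n - X_\infty = \sum_j a_{nj}(X_j - X_\infty) + \bigl(\sum_j a_{nj} - 1\bigr)X_\infty$, followed by splitting the first sum into a finite head (controlled by condition 2) and a tail (controlled by conditions 1 and the $L^p$-convergence), with condition 3 handling the last term --- you merely package the first sum as the scalar Toeplitz lemma applied to $c_j = \|X_j - X_\infty\|_p$ rather than writing out the explicit $\eps/3$ bounds. Your preliminary checks (reduction to $\Omega'$ via Lemma \ref{lem:2} and the absolute convergence of $\sum_j a_{nj}X_j$ in $L^p$) are a welcome addition that the paper's proof passes over with a reference to Definition \ref{def:3} and Lemma \ref{lem:2}.
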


\begin{proof}
  For any given $ \eps > 0 $, the condition $ X_n \to X_\infty $\,, in $ L^p(\Omega) $ (refer to Definition \ref{def:3} and Lemma \ref{lem:2}) implies 
  that there exists $ N_1 > 1 $, such that for all $ n \ge N_1 $, 
  the following inequality holds
  \begin{eqnarray}\mylabel{eq5:2}
    (E|X_n - X_\infty|^p)^{\frac{1}{p}} < \frac{\eps}{3M}.
  \end{eqnarray}
  The condition   $ X_n,X_\infty \in L^p(\Omega) $ implies that there exists $ K > 1 $, 
  such that $ ||X_n||_p \le K $ (again refer to Definition \ref{def:3}) for all $ n = 1, 2, ..., \infty $. 
  For the fixed $ N_1 $. Condition 2. implies that there exists $ N_2 \ge N_1 $, such that
  \begin{eqnarray}\mylabel{eq5:3}
    |a_{nj}| \le \frac{\eps}{6KN_1}, \text{ for all } n \ge N_2, \text{ and all } 1 \le j \le N_1.
  \end{eqnarray}
  From the condition 3. in this proposition, there exists $ N \ge N_2 $, such that
  \begin{eqnarray}\mylabel{eq5:4}
    \left| \sum_{j=1}^\infty a_{nj} -1 \right| \le \frac{\eps}{3K}, \text{ for all } n \ge N.
  \end{eqnarray}
  Now, for all $ n \ge N $, from  (\ref{eq5:2}), (\ref{eq5:3}), and (\ref{eq5:4}), we have
  \begin{eqnarray*}
    & (E|(Ax)_n - X_\infty|^p)^{\frac{1}{p}} = \\
    & \ds{ = \left( E \abs{ \sum_{j=1}^{\infty} a_{nj} X_j - \sum_{j=1}^{\infty} a_{nj} X_\infty + \left( \sum_{j=1}^{\infty} a_{nj} -1 \right) X_\infty }^p \right)^{\frac{1}{p}} } \\
    & \ds{ \le \left( E \abs{ \sum_{j=1}^{\infty} a_{nj} (X_j - X_\infty)  }^p \right)^{\frac{1}{p}}   +   \left( E \abs{ \left( \sum_{j=1}^{\infty} a_{nj} -1 \right) X_\infty }^p \right)^{\frac{1}{p}} } \\
    & \ds{ \le \left( E \abs{ \sum_{j=1}^{N_1} a_{nj} (X_j - X_\infty)  }^p \right)^{\frac{1}{p}}    +   \left( E \abs{ \sum_{j=N_1 +1}^{\infty} a_{nj} (X_j - X_\infty)  }^p \right)^{\frac{1}{p}}   +   \left( E \abs{ \left( \sum_{j=1}^{\infty} a_{nj} -1 \right) X_\infty }^p \right)^{\frac{1}{p}} } \\
    & \ds{ \le  \sum_{j=1}^{N_1} \abs{a_{nj}} \left( E \abs{ (X_j - X_\infty)  }^p \right)^{\frac{1}{p}}    +   \sum_{j=N_1 +1}^{\infty} \abs{a_{nj}}  \left( E \abs{ (X_j - X_\infty)  }^p \right)^{\frac{1}{p}}   +    \abs{ \left( \sum_{j=1}^{\infty} a_{nj} -1 \right) } \left( E \abs{X_\infty}^p \right)^{\frac{1}{p}} } \\
    & \ds{ \le  2K \sum_{j=1}^{N_1} \abs{a_{nj}}   +   \sum_{j=N_1 +1}^{\infty} \abs{a_{nj}} \frac{\eps}{3M}   +    \frac{\eps}{3K} K } \\
    & < \eps.
  \end{eqnarray*}
\end{proof}

Similarly to the proof of Proposition \ref{prop:5}, by applying the completeness property of $ L^p(\Omega) $, 
we can prove the following proposition.

\begin{proposition}\mylabel{prop:6}
  Let $ p \ge 1 $. If $ A $ satisfies the following conditions:
  \begin{enumerate}
    \item ${\underset{1\le i<\infty}{\text{lub}}{{\sum}^{\infty}_{j=0}}{ |a_{ij}| }=M<\infty}$;  
    \item ${\underset{i\rightarrow\infty}{\text{lim}} a_{ij} } = 0 \text{ for } j = 1, 2, \ldots{}$;
    \item ${\underset{i\rightarrow\infty}{\text{lim}}{\sum}^{\infty}_{j=1}{ a_{ij} }} = 1$.
  \end{enumerate}
  Then  $ x $ converges in $ L^p(\Omega) $ implies that  $ Ax $  converges in $ L^p(\Omega) $, whenever 
  $$  X_n,\, X_\infty  \text{ are finite a.e., that is, } P(X_n \in \R)=1, P(X_\infty \in \R)=1. $$
\end{proposition}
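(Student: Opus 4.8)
The plan is to prove that the sequence $(Ax)_n$ is Cauchy in $L^p(\Omega)$ and then to invoke completeness of $L^p(\Omega)$; this is the ``new ingredient'' over Proposition \ref{prop:5}, and it also yields the conclusion without having to identify the limit of $Ax$ in advance (of course it also follows at once from Proposition \ref{prop:5}, since the limit there is $X_\infty$). Let $X_\infty$ denote the $L^p$-limit of $x$. By Lemma \ref{lem:2}(b) I would fix $\Omega'\subseteq\Omega$ with $P(\Omega')=1$ on which every $X_1,X_2,\dots$ and $X_\infty$ is finite, work on $\Omega'$, and as in (\ref{eq5:2}) pick $K>1$ with $\|X_n\|_p\le K$ for all $n=1,2,\dots,\infty$.

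Fix $\eps>0$. First I would choose, exactly as in the proof of Proposition \ref{prop:5}, an index $N_1$ with $\|X_j-X_\infty\|_p<\eps/(6M)$ for all $j>N_1$. For $m,n$ large I would use the decomposition
\[
  (Ax)_n-(Ax)_m
  = \sum_{j=1}^{\infty}(a_{nj}-a_{mj})(X_j-X_\infty)
  + \left( \sum_{j=1}^{\infty}a_{nj} - \sum_{j=1}^{\infty}a_{mj} \right)X_\infty ,
\]
split the first sum at $N_1$, and bound $\|(Ax)_n-(Ax)_m\|_p$ by Minkowski's inequality as
\[
  \sum_{j=1}^{N_1}|a_{nj}-a_{mj}|\,\|X_j-X_\infty\|_p
  + \sum_{j=N_1+1}^{\infty}|a_{nj}-a_{mj}|\,\|X_j-X_\infty\|_p
  + \left| \sum_{j=1}^{\infty}a_{nj} - \sum_{j=1}^{\infty}a_{mj} \right|\,\|X_\infty\|_p .
\]
Condition 1 bounds the middle term by $2M\cdot\eps/(6M)=\eps/3$. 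Condition 3 forces $\sum_j a_{nj}\to1$ and $\sum_j a_{mj}\to1$, so the last term is $<\eps/3$ once $m,n$ exceed some $N'$; condition 2 forces each of the finitely many coefficients $a_{nj},a_{mj}$ with $1\le j\le N_1$ to tend to $0$, and since $\|X_j-X_\infty\|_p\le 2K$ there, the first term is $<\eps/3$ once $m,n$ exceed some $N''\ge N'$. Adding up, $\|(Ax)_n-(Ax)_m\|_p<\eps$ for $m,n\ge N''$, so $(Ax)_n$ is Cauchy and, by completeness of $L^p(\Omega)$, converges.

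These estimates are precisely those of Proposition \ref{prop:5} with the single row $(a_{nj})_j$ replaced by the difference $(a_{nj}-a_{mj})_j$ and $\sum_j a_{nj}-1$ replaced by $\sum_j a_{nj}-\sum_j a_{mj}$, so I expect no real difficulty there. The one point that does need care is making the tail bound uniform in the \emph{two} indices $m$ and $n$ simultaneously, which is exactly what condition 1 (a bound on the whole row sums, not on a single row) supplies. A second, subtler point — invisible in the classical theory but relevant in the present $\Rii$-valued setting — is the legitimacy of the algebraic rearrangement in the first display: one must know that each series $\sum_j a_{nj}X_j$ converges in $L^p$ (from condition 1 and $\|X_j\|_p\le K$) and that inserting and removing the $X_\infty$-terms never creates an $\infty-\infty$; this is where the standing hypothesis that all $X_n,X_\infty$ are finite a.e. is used (compare Example \ref{ex:2}), and on $\Omega'$ it is immediate. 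With that in hand, completeness of $L^p(\Omega)$ closes the argument.
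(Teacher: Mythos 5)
Your proposal is correct and is essentially the paper's own argument: the paper gives no details beyond ``similarly to the proof of Proposition \ref{prop:5}, by applying the completeness property of $L^p(\Omega)$,'' and your Cauchy estimate --- the three-term decomposition of Proposition \ref{prop:5} with the single row $(a_{nj})_j$ replaced by the difference $(a_{nj}-a_{mj})_j$, followed by completeness of $L^p(\Omega)$ --- is exactly that sketch carried out. Your side remark that the conclusion also follows at once from Proposition \ref{prop:5} itself (since the limit there is identified as $X_\infty$) is likewise correct.
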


\begin{remark}\mylabel{rem:6}
  By following the steps in the proof of Proposition \ref{prop:5}, 
  one can show that both Propositions \ref{prop:5} and \ref{prop:6} hold for $ p = \infty $.
\end{remark}



\bibliographystyle{plain}

\begin{thebibliography}{10}

\bibitem{Baron:66}
S.A. Baron.
\newblock {\em Introduction to the theory of summability of series.}
\newblock {\em Tartu} (1966) (In Russian)

\bibitem{Belen:Mursaleen:Yildirim:12}
C.~Belen, M.~Mursaleen and M.~Yildirim.
\newblock {\em Statistical A-summability of double sequences and a Korovkin type approximation theorem.}
\newblock {\em Bull. Korean Math. Soc.} 49(4), 851–861 (2012)

\bibitem{Boos:00}
J. Boos.
\newblock {\em Classical and Modern Methods in Summability.}
\newblock {\em Oxford University Press}, New York, 2000

\bibitem{Brudno:45}
A.L. Brudno.
\newblock {\em Summation of bounded sequences by matrices.}
\newblock {\em Mat. Sb.} 16 (58) : 2 (1945) pp. 191–247
\newblock (In Russian, English abstract.)

\bibitem{Connor:88}
J.S. Connor.
\newblock {\em The statistical and strong p-Cesàro convergence of sequences.}
\newblock {\em Anal. (Munich)} 8, 47–63 (1988)

\bibitem{Cooke:50}
R.G. Cooke.
\newblock {\em Infinite matrices and sequence spaces.}
\newblock {\em Macmillan} (1950)

\bibitem{Edely:Mursaleen:09}
O.H.H. Edely, M. Mursaleen.
\newblock {\em On statistical A-summability.}
\newblock {\em Math. Comput. Model.} 49, 672–680 (2009)

\bibitem{Etemadi:06}
N. Etemadi.
\newblock {\em Convergence of weighted averages of random variables revisited.}
\newblock {\em Proceedings of the American Mathematical Society}, Volume 134, Number 9, 2739–2744 (2006)

\bibitem{Fast:51}
H. Fast.
\newblock {\em Sur la convergence statistique.}
\newblock {\em Colloq. Math.} 2, 241–244 (1951)

\bibitem{Feller:71}
W. Feller.
\newblock {\em An Introduction to Probability Theory and Its Applications, Volume II.}
\newblock {\em John Wiley \& Sons Inc.}, 2nd Ed., New York, 1971

\bibitem{Fridy:85}
J.A. Fridy.
\newblock {\em On statistical convergence.}
\newblock {\em Anal. (Munich)} 5, 301–313 (1985)

\bibitem{Fridy:Miller:91}
J.A. Fridy, H.I. Miller.
\newblock {\em A matrix characterization of statistical convergence.}
\newblock {\em Analysis} 11 (1991), pp. 59–66

\bibitem{Hamilton:36}
H. J. Hamilton.
\newblock {\em Transformations of multiple sequences.}
\newblock {\em Duke Math. J.} 2 (1936), no. 1, 29–60.

\bibitem{Hardy:49}
G. H. Hardy.
\newblock {\em Divergent Series.}
\newblock {\em Oxford Univ. Press}, London, 1949.

\bibitem{Kangro:70}
G.F. Kangro.
\newblock {\em Theory of summability of sequences and series.}
\newblock {\em J. Soviet Math.} 5 : 1 (1970) pp. 1–45

\bibitem{Knopp:64}
K. Knopp.
\newblock {\em Theorie und Anwendung der unendlichen Reihen.}
\newblock {\em Springer} (1964)
\newblock (English translation: Blackie, 1951 \& Dover, reprint, 1990)

\bibitem{Knopp:90}
K. Knopp.
\newblock {\em Theory and Application of Infinite Series.}
\newblock {\em Dover Publishing Company}, 1990

\bibitem{Kolk:93}
E. Kolk.
\newblock {\em Matrix summability of statistically convergent sequences.}
\newblock {\em Anal. (Munich)} 13, 77–83 (1993)

\bibitem{Moricz:02}
F. M\'oricz.
\newblock {\em Tauberian conditions under which statistical convergence follows from statistical summability (C, 1).}
\newblock {\em J. Math. Anal. Appl.} 275, 277–287 (2002)

\bibitem{Moricz:03}
F. M\'oricz.
\newblock {\em Statistical convergence of multiple sequences.}
\newblock {\em Arch. Math. (Basel)} 81, 82–89 (2003)

\bibitem{Mursaleen:00}
M. Mursaleen.
\newblock {\em λ-Statistical convergence.}
\newblock {\em Math. Slovaca} 50, 111–115 (2000)

\bibitem{Mursaleen:Edely:03}
M. Mursaleen, O.H.H. Edely.
\newblock {\em Statistical convergence of double sequences.}
\newblock {\em J. Math. Anal. Appl.} 288, 223–231 (2003)

\bibitem{Neubrum:Smital:Salat:68}
T. Neubrunn, J. Smital, T. Salat.
\newblock {\em On the structure of the space M(0,1).}
\newblock {\em Rev. Roumaine Math. Pures Appl.} 13 (1968), pp. 337–386

\bibitem{Peyerimhoff:69}
A. Peyerimhoff.
\newblock {\em Lectures on summability.}
\newblock {\em Springer} (1969)

\bibitem{Robison:26}
G. M. Robison.
\newblock {\em Divergent double sequences and series.}
\newblock {\em Trans. Amer. Math. Soc.} 28 (1926), no. 1, 50–73.

\bibitem{Salat:80}
T. \v Sal\'at.
\newblock {\em On statistically convergent sequences of real numbers.}
\newblock {\em Math. Slovaca} 30, 139–150 (1980)

\bibitem{Zygmund:41}
A. Zygmund.
\newblock {\em On the convergence and summability of power series on the circle of convergence.}
\newblock {\em Proc. London Math. Soc.} 47, 326–350 (1941)



\end{thebibliography}

\bigskip
\noindent
\parbox[t]{.48\textwidth}{
Jinlu Li \\
Department of Mathematics \\
Shawnee State University \\
940 Second Street \\
Portsmouth, OH 45662, USA \\
jli@shawnee.edu 
} \hfill
\parbox[t]{.48\textwidth}{
Robert Mendris \\
Department of Mathematics \\
Shawnee State University \\
940 Second Street \\
Portsmouth, OH 45662, USA \\
rmendris@shawnee.edu 
} \hfill

\end{document}